\documentclass[11pt]{article}

\usepackage[T1]{fontenc}
\usepackage[utf8]{inputenc}
\usepackage{lmodern}
\usepackage{amsmath,amssymb,amsthm,mathtools}
\usepackage{enumitem}
\usepackage{hyperref}
\usepackage[margin=1.1in]{geometry}

\hypersetup{
  colorlinks=true,
  linkcolor=blue,
  citecolor=blue,
  urlcolor=blue,
  pdftitle={Restricted Binomial GCDs at Primes Congruent to -1},
  pdfauthor={John Fairfax-Ball}
}

\newtheorem{theorem}{Theorem}[section]
\newtheorem{proposition}[theorem]{Proposition}
\newtheorem{lemma}[theorem]{Lemma}
\newtheorem{corollary}[theorem]{Corollary}
\newtheorem{remark}[theorem]{Remark}
\newtheorem{definition}[theorem]{Definition}

\newcommand{\G}{G}
\newcommand{\vp}{v_p}

\newcommand{\N}{\mathbb{N}}

\title{Restricted Binomial GCDs at Primes Congruent to \texorpdfstring{$-1$}{-1}}
\author{John Fairfax-Ball}
\date{25 September 2026}

\begin{document}
\maketitle

\begin{abstract}
For integers $m\geq 2$ and $m\mid N$, let
\[
  G(N;m)=\gcd\left\{\binom Nk:0<k<N,\ m\mid k\right\}.
\]
We prove a complete $p$-adic valuation formula for $G(N;m)$ at primes
$p\equiv -1\pmod m$, under the hypotheses $m\geq 3$, $m\mid N$, and
$m<N$.  Writing $N=\sum_i d_i p^i$, put
$A=\sum_{i\text{ even}}d_i$ and $B=\sum_{i\text{ odd}}d_i$.
Then $v_p(G(N;m))$ is $2$ in the exceptional mixed case
$(A,B)=(1,1)$ with $p=m-1$, is $1$ in the mixed case
$(A,B)=(1,1)$ with $m<p$, is $1$ in the one-parity cases
$(A,B)=(m,0)$ and $(A,B)=(0,m)$, and is $0$ otherwise.
The proof uses Kummer's theorem to translate the problem into digitwise
borrow counts and a minimal signed zero-sum classification.  A source-level
literature audit through 25 September 2026 located no equivalent prior
theorem for the full minus-one classification: McTague's corrected
same-residue extension covers the one-parity $p>m$ subcases, but not the
mixed-parity branch or the $p=m-1$ regime.  The theorem, its plus-one
companion, a scaling reduction, and the $m=3,4,6$ specializations have been
formalized in Lean 4 / Mathlib.
\end{abstract}

\section{Introduction}

The greatest common divisor of selected binomial coefficients in a fixed row
of Pascal's triangle is a classical theme going back to Ram's theorem for the
entire interior row.  This paper studies the arithmetic-progression selection
in which the lower index is constrained to be a multiple of a fixed modulus.
For $m\geq 2$ and $m\mid N$ define
\[
  \G(N;m)=\gcd\left\{\binom Nk:0<k<N,\ m\mid k\right\}.
\]
The same object appears in McTague's theorem, with notation
$\gcd_{0<k<n/q}\binom n{qk}$, after substituting $n=N$ and $q=m$.

McTague's Theorem Q determines the $p$-adic valuation for primes
$p\equiv 1\pmod m$.  The corrected first remark in arXiv:1510.06696v5 also
extends the same method to a same-residue condition.  In the case
$p\equiv -1\pmod m$, that remark supplies the one-parity branches in which all
minimal powers have even exponent or all have odd exponent, provided $p>m$.
It does not apply to the mixed branch, where both residues $+1$ and $-1$ occur,
and the corrected $p>m$ hypothesis excludes every $p=m-1$ case.  McTague also
records the isolated example $m=3$, $p=2$, $N=6$, where the valuation is $2$.
Thus the existence of a valuation-$2$ exception is not new; the contribution
here is the complete classification, including the general mixed branch and the
general $p=m-1$ regime.

The main theorem is as follows.

\begin{theorem}[Minus-one valuation formula]\label{thm:minus-one}
Let $m,N,p\in\N$ satisfy
\[
  m\geq 3,\qquad m\mid N,
  \qquad m<N,
\]
and let $p$ be prime with $p\equiv -1\pmod m$.  Write
$N=\sum_i d_i p^i$ in base $p$, and set
\[
  A=\sum_{i\text{ even}}d_i,
  \qquad
  B=\sum_{i\text{ odd}}d_i .
\]
Then
\[
\vp(\G(N;m))=
\begin{cases}
2,& (A,B)=(1,1)\text{ and }p=m-1,\\
1,& (A,B)=(1,1)\text{ and }m<p,\\
1,& (A,B)=(m,0)\text{ or }(A,B)=(0,m),\\
0,& \text{otherwise.}
\end{cases}
\]
\end{theorem}

The congruence condition is written in the Lean formalization as
$p\bmod m=m-1$.  Under $m\geq 3$ and primality of $p$, this condition implies the
dichotomy $p=m-1$ or $m<p$, which is why those alternatives occur in the theorem.

The proof is short conceptually but delicate in its exceptional case.  Kummer's
theorem identifies $\vp\binom Nk$ with the number of borrows when subtracting
$k$ from $N$ in base $p$.  Since $p^i\equiv (-1)^i\pmod m$, the divisibility
condition $m\mid k$ becomes a signed zero-sum condition on the even and odd
base-$p$ digits selected by $k$.  If a proper nonzero signed zero-sum subdigit
selection exists, it gives an admissible no-borrow $k$, and hence valuation
zero.  If none exists, the pair $(A,B)$ is a minimal signed zero-sum.  Such
minimal pairs are exactly
\[
  (1,1),\qquad (m,0),\qquad (0,m).
\]
The remaining work constructs one-borrow witnesses in the valuation-$1$ cases
and proves that, in the mixed $p=m-1$ case, no admissible coefficient can have
exactly one borrow while an explicit coefficient has exactly two.

The paper is organized as follows.  Section~\ref{sec:literature} records the
precise relationship with prior work.  Sections~\ref{sec:prelim}--\ref{sec:minimal}
set up the digit and signed zero-sum machinery.  Sections~\ref{sec:noborrow} and
\ref{sec:proof-main} prove Theorem~\ref{thm:minus-one}.  Sections~\ref{sec:plusone}
and~\ref{sec:scaling} state the plus-one and scaling results used for applications.
Section~\ref{sec:small} gives the complete $m=3,4,6$ prime-by-prime formulas.
Section~\ref{sec:formal} describes the Lean formalization and provenance record.

\section{Relation to prior work}\label{sec:literature}

McTague's theorem concerns
\[
  \gcd_{0<k<n/q}\binom n{qk},
\]
which is exactly $G(N;m)$ when $n=N$, $q=m$, and $m\mid N$.  For
$p\equiv 1\pmod q$, McTague proves that the $p$-adic valuation is $1$ when the
base-$p$ digit sum of $n$ is at most $q$, and $0$ otherwise.  In the present
setting $m\mid N$, so the digit sum is a positive multiple of $m$; the condition
``at most $m$'' is therefore equivalent to ``equal to $m$''.  Thus the plus-one
theorem in Section~\ref{sec:plusone} is known mathematics, restated here because
it is part of the formal theorem layer and is needed for the small-modulus
applications.

The first remark on page 2 of McTague's corrected v5 preprint allows a
same-residue replacement for $p\equiv 1\pmod q$, but requires $p>q$.  For
$p\equiv -1\pmod m$, the residues of the powers are
\[
  p^i\equiv (-1)^i\pmod m.
\]
Therefore the same-residue condition applies exactly to the one-parity cases
$(A,B)=(m,0)$ and $(A,B)=(0,m)$ when $p>m$.  It excludes the mixed pair
$(A,B)=(1,1)$, and its $p>q=m$ hypothesis excludes all $p=m-1$ cases.

Wu studies the same restricted gcd, written
$g(m,n)=\gcd\{\binom{mn}{mk}:1\leq k<n\}$, and gives product formulas under
different hypotheses.  In particular, the principal product theorem assumes a
prime-power modulus and conditions forcing the relevant non-modulus primes into
the $1\pmod m$ residue class.  It therefore does not imply the minus-one
classification proved here.  Wu's proof of a prime-power modulus case also
makes explicit the standard Kummer carry-shift mechanism behind the scaling
reduction in Section~\ref{sec:scaling}; the scaling theorem is useful formal
infrastructure, not a headline novelty claim.

Guo, Qiu, Cao, Feng and Gao formalize in Lean a different binomial-gcd problem,
where the lower index is fixed and the upper row varies.  Their result is
important nearby work and prevents any blanket claim that this is the first
formalization of binomial-gcd mathematics.  The defensible formalization claim
is narrower: no prior proof-assistant formalization of this fixed-row
arithmetic-progression valuation family, or of Theorem~\ref{thm:minus-one}, was
located in the audit.

These literature statements are negative-search statements, not claims of
unconditional historical priority.  The audit may miss unpublished, private,
unindexed, or differently phrased prior work.

\section{Preliminaries}\label{sec:prelim}

Let $v_p(x)$ denote the exponent of the prime $p$ in a positive integer $x$.
We use the convention that the base-$p$ expansion of $N$ is
\[
  N=\sum_{i\geq 0} d_i p^i,
  \qquad 0\leq d_i<p,
\]
with all but finitely many $d_i$ equal to zero.  Define the parity-separated
digit sums
\[
  E_p(N)=\sum_{i\text{ even}}d_i,
  \qquad
  O_p(N)=\sum_{i\text{ odd}}d_i.
\]
In Theorem~\ref{thm:minus-one} these are denoted by $A$ and $B$.

The proof uses Kummer's theorem in the following form.

\begin{theorem}[Kummer]\label{thm:kummer}
Let $p$ be prime and $0\leq k\leq N$.  Then
$v_p\binom Nk$ is the number of borrows that occur when subtracting $k$ from
$N$ in base $p$, equivalently the number of carries when adding $k$ and $N-k$.
\end{theorem}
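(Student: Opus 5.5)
The plan is to derive Theorem~\ref{thm:kummer} from Legendre's formula via base-$p$ digit sums. Write $s_p(n)$ for the sum of the base-$p$ digits of $n$. First I would establish Legendre's formula
\[
  v_p(n!)=\sum_{j\geq 1}\left\lfloor \frac{n}{p^j}\right\rfloor=\frac{n-s_p(n)}{p-1}.
\]
The first equality comes from counting, for each $j$, the multiples of $p^j$ up to $n$. Each integer in $\{1,\dots,n\}$ is then counted exactly $v_p$ of itself times. For the second equality, write $n=\sum_i c_ip^i$. Then $\lfloor n/p^j\rfloor=\sum_{i\geq j}c_ip^{i-j}$. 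Summing over $j\geq 1$ gives $\sum_i c_i(p^i-1)/(p-1)$, which equals $(n-s_p(n))/(p-1)$.

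Applying this to $\binom Nk=N!/(k!\,(N-k)!)$, the linear terms cancel because $k+(N-k)=N$. This leaves
\[
  v_p\binom Nk=\frac{s_p(k)+s_p(N-k)-s_p(N)}{p-1}.
\]
It therefore suffices to show that when two nonnegative integers $a,b$ are added in base $p$ with $c$ carries, we have $s_p(a)+s_p(b)-s_p(a+b)=c(p-1)$. I would prove this by examining the schoolbook addition column by column. Let $\epsilon_i\in\{0,1\}$ be the carry out of position $i$, with $\epsilon_{-1}=0$. The output digit is $a_i+b_i+\epsilon_{i-1}-p\epsilon_i$. Summing over $i$ gives
\[
  s_p(a+b)=s_p(a)+s_p(b)+\sum_i\epsilon_{i-1}-p\sum_i\epsilon_i=s_p(a)+s_p(b)-(p-1)c .
\]
Taking $a=k$ and $b=N-k$ yields the carry form of the theorem.

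Finally, I would verify the equivalence between borrows and carries. Subtracting $k$ from $N$ column by column, the borrow into position $i+1$ occurs exactly when the output digit $r_i$ of $N-k$ satisfies $k_i+r_i+(\text{incoming borrow})\geq p$. This is precisely the carry condition when adding $k$ and $r=N-k$. An induction on $i$ shows that the borrow sequence of $N-k$ coincides with the carry sequence of $k+(N-k)$. There is no real obstacle here. The only point needing care is the bookkeeping in this last identification, namely that the incoming borrow at position $i$ matches the incoming carry at position $i$; setting up the induction with both sequences starting at $0$ handles it.
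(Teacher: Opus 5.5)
Your proof is correct. Legendre's formula in digit-sum form, the telescoping identity $s_p(a)+s_p(b)-s_p(a+b)=(p-1)c$ (summed over all positions, so the final carry is counted), and the borrow/carry matching together give a complete argument. The matching rests on the column identity $k_i+r_i+\beta_{i-1}=N_i+p\beta_i$ with $N_i\le p-1$, which gives $\beta_i=1\iff k_i+r_i+\beta_{i-1}\ge p$, and induction from $\beta_{-1}=\epsilon_{-1}=0$ then matches borrows with carries.

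The paper does not prove this statement. It quotes Kummer's theorem as a classical result, and the Lean development imports Mathlib's version. So there is no proof in the paper to compare against.

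The other standard route stops at the floor form of Legendre's formula and uses, for each $j\ge 1$,
\[
  \left\lfloor \frac{N}{p^j}\right\rfloor-\left\lfloor \frac{k}{p^j}\right\rfloor-\left\lfloor \frac{N-k}{p^j}\right\rfloor\in\{0,1\}.
\]
This difference equals $1$ exactly when $(k\bmod p^j)+((N-k)\bmod p^j)\ge p^j$, that is, exactly when there is a carry into position $j$. That route matches Legendre terms to carry positions one at a time and needs no digit-sum bookkeeping, which makes it convenient to formalize. Your route costs the extra telescoping step. In return it yields the closed formula $v_p\binom Nk=(s_p(k)+s_p(N-k)-s_p(N))/(p-1)$ as a byproduct, which fits naturally with the digit sums $s_p(N)$, $A$, $B$ used in the rest of the paper.
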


The no-borrow condition has a digitwise interpretation: $\binom Nk$ has
$p$-adic valuation zero if and only if every base-$p$ digit of $k$ is at most
the corresponding digit of $N$.  Thus a no-borrow admissible $k$ is a proper
subdigit selection from the digits of $N$ whose value is divisible by $m$.

When $p\equiv -1\pmod m$, the residue of a selected digit at position $i$ is
weighted by $(-1)^i$.  Hence a subdigit selection with even total $a$ and odd
total $b$ gives a multiple of $m$ precisely when
\[
  a-b\equiv 0\pmod m .
\]
This motivates the following definition.

\begin{definition}
For $m\geq 1$, call a pair $(a,b)\in\N^2$ a signed zero-sum modulo $m$ if
$m\mid a-b$.  It is proper below $(A,B)$ if
$0\leq a\leq A$, $0\leq b\leq B$, $(a,b)\ne(0,0)$, and $(a,b)\ne(A,B)$.
\end{definition}

\begin{lemma}[Signed divisibility]\label{lem:signed-div}
Assume $p\equiv -1\pmod m$.  If $k$ has parity digit sums $(a,b)$ in base $p$,
then $m\mid k$ if and only if $m\mid a-b$.
\end{lemma}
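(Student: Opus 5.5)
The plan is to reduce $k$ modulo $m$ digit by digit. Write $k=\sum_i c_i p^i$ in base $p$, so that $a=\sum_{i\text{ even}}c_i$ and $b=\sum_{i\text{ odd}}c_i$. The hypothesis $p\equiv -1\pmod m$ gives $p^i\equiv(-1)^i\pmod m$ for every $i\geq 0$. This follows by induction on $i$: the case $i=0$ is trivial, and multiplying the congruence for $i$ by $p\equiv -1$ gives the case $i+1$. One could also just note that congruences are preserved under taking powers.

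Summing over positions, we obtain
\[
  k=\sum_i c_i p^i\equiv \sum_i (-1)^i c_i
  =\sum_{i\text{ even}}c_i-\sum_{i\text{ odd}}c_i=a-b\pmod m .
\]
Hence $k\equiv a-b\pmod m$. In particular, $m\mid k$ holds if and only if $m\mid a-b$, which is the claim.

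There is no real obstacle here. The only care needed is in interpreting $a-b$ as an integer, possibly negative, when $b>a$. Divisibility of a negative integer by $m$ is the usual notion, so the congruence argument takes place in $\mathbb{Z}$. In a formal setting with natural-number subtraction, one would phrase the statement as $a\equiv b\pmod m$ instead. The finiteness of the digit expansion makes the sums finite, so the rearrangement into even and odd parts is legitimate.
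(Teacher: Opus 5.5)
Your proof is correct and matches the paper's argument: reduce the base-$p$ expansion of $k$ modulo $m$ using $p^i\equiv(-1)^i\pmod m$ to get $k\equiv a-b\pmod m$. Your added justifications are fine, namely the induction for $p^i\equiv(-1)^i$ and reading $a-b$ as an integer in $\mathbb{Z}$.
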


\begin{proof}
Modulo $m$ one has
\[
  k=\sum_i e_i p^i\equiv \sum_i e_i(-1)^i = a-b \pmod m,
\]
where the $e_i$ are the base-$p$ digits of $k$.
\end{proof}

\section{Minimal signed zero-sums}\label{sec:minimal}

The next elementary classification is the combinatorial core of the proof.

\begin{lemma}[Minimal signed zero-sums]\label{lem:minimal}
Let $m\geq 3$.  Suppose $(A,B)$ is a nonzero signed zero-sum modulo $m$ and has
no proper signed zero-sum below it.  Then
\[
  (A,B)=(1,1),\qquad (A,B)=(m,0),\qquad\text{or}\qquad (A,B)=(0,m).
\]
Conversely, each of these three pairs is minimal.
\end{lemma}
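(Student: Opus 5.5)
The plan is a case split on whether both coordinates $A$ and $B$ are positive, followed by a direct check of the converse.

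\emph{Mixed case: $A\geq 1$ and $B\geq 1$.} Take $(a,b)=(1,1)$. Since $m\mid 0$, it is a signed zero-sum, it is nonzero, and it lies below $(A,B)$. Minimality forbids it from being proper, so it must equal $(A,B)$. Hence $(A,B)=(1,1)$.

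\emph{One-parity case.} Suppose instead $B=0$ (the case $A=0$ is symmetric).
\begin{enumerate}[label=(\roman*)]
\item Since $(A,0)$ is nonzero, $A\geq 1$, and since it is a signed zero-sum, $m\mid A$, so $A\geq m$.
\item The pair $(m,0)$ is then a nonzero signed zero-sum below $(A,0)$.
\item By minimality $(m,0)$ cannot be proper, so $A=m$.
\end{enumerate}
The case $A=0$ gives $(0,m)$ in the same way.

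\emph{Converse.} We check each of the three pairs.
\begin{enumerate}[label=(\roman*)]
\item For $(1,1)$: the pairs below it other than $(0,0)$ and $(1,1)$ are $(1,0)$ and $(0,1)$. These have difference $\pm1$, which is not divisible by $m$ since $m\geq 2$.
\item For $(m,0)$: a proper pair has the form $(a,0)$ with $0<a<m$, so $m\nmid a$.
\item The pair $(0,m)$ is symmetric to $(m,0)$.
\end{enumerate}
All three are nonzero signed zero-sums, so each is minimal.

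Nothing here is a real obstacle. The only point needing care is to note that $(1,1)$ is a zero-sum for every $m$, which is what forces the mixed case. The hypothesis $m\geq 3$ is not actually needed for this lemma; $m\geq 2$ suffices. It matters elsewhere, where $(1,1)$ must be distinguished from $(m,0)$, and for $m=2$ the pair $(2,0)$ still behaves fine.
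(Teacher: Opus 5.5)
Your proof is correct and follows the paper's argument: $(1,1)$ forces the mixed case, $(m,0)$ (resp.\ $(0,m)$) forces the one-parity case, and the converse is a direct check, which you write out more explicitly than the paper's ``plainly minimal'' remark. Your side observation is also right: the lemma itself uses only $m\geq 2$.
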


\begin{proof}
If $A$ and $B$ are both positive, then $(1,1)$ is already a nonzero signed
zero-sum below $(A,B)$.  Minimality therefore forces $A=B=1$.  If $B=0$, the
condition is $m\mid A$.  The nonzero subpair $(m,0)$ is then available whenever
$A\geq m$, and since $A$ itself is a positive multiple of $m$, minimality gives
$A=m$.  The case $A=0$ is symmetric.  The three displayed pairs plainly have no
proper nonzero signed zero-sum below them.
\end{proof}

\section{No-borrow criterion}\label{sec:noborrow}

For the fixed $N$, let $(A,B)=(E_p(N),O_p(N))$.

\begin{proposition}[No-borrow criterion]\label{prop:noborrow}
Assume $m\geq 3$, $m\mid N$, $m<N$, and $p\equiv -1\pmod m$ is prime.  Then
$v_p(G(N;m))=0$ if and only if there is a proper nonzero signed zero-sum
$(a,b)$ below $(A,B)$.
\end{proposition}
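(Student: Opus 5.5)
We prove both directions of the equivalence by translating between admissible no-borrow indices $k$ and proper signed zero-sums, using Kummer's theorem (Theorem~\ref{thm:kummer}) and Lemma~\ref{lem:signed-div}. The basic observation is that $v_p(G(N;m))=0$ holds exactly when some admissible $k$ (that is, $0<k<N$ with $m\mid k$) has $v_p\binom Nk=0$. Indeed, $v_p$ of a gcd of positive integers is the minimum of their valuations, and the set of admissible $k$ is nonempty because $k=m$ works when $m<N$. By Kummer, $v_p\binom Nk=0$ means that every digit $e_i$ of $k$ satisfies $e_i\le d_i$.

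For the direction ($\Rightarrow$), suppose $k$ is admissible with no borrows. Its digits $e_i\le d_i$ give parity sums $(a,b)$ with $0\le a\le A$ and $0\le b\le B$. By Lemma~\ref{lem:signed-div}, $m\mid k$ gives $m\mid a-b$. Next, $k>0$ forces some $e_i>0$, so $(a,b)\ne(0,0)$. Finally, $(a,b)=(A,B)$ together with $e_i\le d_i$ for every $i$ would force $e_i=d_i$ for all $i$, hence $k=N$, which is excluded. So $(a,b)$ is proper.

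For the direction ($\Leftarrow$), let $(a,b)$ be a proper nonzero signed zero-sum below $(A,B)$. Since $a\le A=\sum_{i \text{ even}} d_i$, we can choose even-position digits $0\le e_i\le d_i$ with $\sum_{i \text{ even}} e_i=a$ greedily: fill the positions in some order, taking $e_i=\min(d_i,\text{remaining})$. We choose the odd-position digits the same way so that they sum to $b$. Let $k=\sum_i e_ip^i$. Then $e_i\le d_i<p$, so these really are the base-$p$ digits of $k$, and $0\le k\le N$. Since $(a,b)\ne(0,0)$, some $e_i>0$ and hence $k>0$. Since $(a,b)\ne(A,B)$, some $e_i<d_i$ and hence $k<N$. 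Lemma~\ref{lem:signed-div} gives $m\mid k$, so $k$ is admissible, and subtracting $k$ from $N$ involves no borrows. By Kummer, $v_p\binom Nk=0$, and therefore $v_p(G(N;m))=0$.

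There is no serious obstacle. The one point needing care is the properness bookkeeping: checking that $(a,b)\ne(A,B)$ corresponds exactly to $k\ne N$ under the digit-domination constraint, and that the greedy distribution produces genuine base-$p$ digits, which holds because $e_i\le d_i\le p-1$. The hypotheses $m\mid N$ and $m\ge 3$ are not needed for this equivalence itself, beyond ensuring the gcd is taken over a nonempty set. They matter only later, when this proposition is combined with Lemma~\ref{lem:minimal}.
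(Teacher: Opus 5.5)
Your proof is correct and follows the same route as the paper: greedy subdigit selection combined with Lemma~\ref{lem:signed-div} and Kummer's theorem for one direction, and reading off the parity digit sums of a no-borrow admissible $k$ for the other. You also make explicit two points the paper leaves implicit: the admissible set is nonempty because $k=m$ is admissible, and under digit domination the properness conditions on $(a,b)$ correspond exactly to $0<k<N$.
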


\begin{proof}
If such a pair $(a,b)$ exists, choose subdigits of the even positions of $N$
with total $a$ and subdigits of the odd positions with total $b$.  This can be
done greedily because the requested totals are bounded by the corresponding
ambient digit sums.  The resulting integer $k$ satisfies $0\leq k\leq N$
digitwise.  Since the selection is nonzero and proper, $0<k<N$.  By
Lemma~\ref{lem:signed-div}, $m\mid k$.  By Kummer's theorem there are no
borrows, so $v_p\binom Nk=0$, and hence $v_p(G(N;m))=0$.

Conversely, if $v_p(G(N;m))=0$, some admissible coefficient has valuation zero.
Kummer's theorem makes its lower index $k$ a proper nonzero subdigit selection
from $N$.  Its parity digit sums give a proper signed zero-sum below $(A,B)$ by
Lemma~\ref{lem:signed-div}.
\end{proof}

If there is no proper signed zero-sum, Proposition~\ref{prop:noborrow} gives a
uniform lower bound $v_p\binom Nk\geq 1$ for all admissible $k$.  By
Lemma~\ref{lem:minimal}, only the three minimal shapes remain.

\section{Proof of the minus-one theorem}\label{sec:proof-main}

We now prove Theorem~\ref{thm:minus-one}.  The proof is organized by the three
minimal signed zero-sum shapes.

\begin{proof}[Proof of Theorem~\ref{thm:minus-one}]
Let $(A,B)=(E_p(N),O_p(N))$.  Since $m\mid N$, Lemma~\ref{lem:signed-div}
applied to $N$ shows $m\mid A-B$.

If $(A,B)$ contains a proper nonzero signed zero-sum, Proposition~\ref{prop:noborrow}
gives an admissible coefficient of valuation zero.  This proves the final
``otherwise'' branch whenever the pair is not minimal.

Assume now that no proper signed zero-sum exists.  Then every admissible
coefficient has at least one borrow, and Lemma~\ref{lem:minimal} gives one of
three shapes.

First suppose $(A,B)=(m,0)$ or $(A,B)=(0,m)$.  All nonzero digits of $N$ occur
in one parity.  There is an admissible lower index whose subtraction from $N$
forces exactly one borrow: in the $p>m$ case this is the same one-parity
construction implicit in McTague's same-residue argument, while in the edge case
$p=m-1$ the borrow is arranged across the lowest available gap between two
occupied positions of the same parity.  In both subcases Kummer's theorem gives
an admissible coefficient with valuation $1$.  Since every admissible coefficient
has valuation at least $1$, the gcd valuation is exactly $1$.

Next suppose $(A,B)=(1,1)$ and $m<p$.  The two occupied parities give residues
$+1$ and $-1$ modulo $m$.  Because $p$ is larger than $m$, the mixed residue can
be realized by a digitwise lower index whose subtraction crosses exactly one
base-$p$ boundary.  Again Kummer's theorem gives a valuation-$1$ witness, and
the no-borrow criterion supplies the lower bound $1$ for all admissible
coefficients.  Hence $v_p(G(N;m))=1$.

It remains to handle the exceptional mixed case $(A,B)=(1,1)$ and $p=m-1$.
Here $p+1=m$.  The mixed digit condition and $m\mid N$ force
\[
  N=p^s+p^t
\]
for two occupied positions $s<t$ of opposite parity.  A one-borrow admissible
index would have to cross exactly one of the two relevant boundaries.  The
residue condition $m\mid k$, together with $p+1=m$, then forces a sum of two
same-parity powers with coefficient sum strictly between $0$ and $m$ to be
divisible by $m$, which is impossible.  Thus every admissible coefficient has
valuation at least $2$.

On the other hand, the explicit lower index
\[
  k=p^{t-1}+p^{t-2}
\]
(with the adjacent-position interpretation in the smallest-gap case) is
admissible and its subtraction from $N$ has exactly two borrow boundaries,
namely the two top boundaries adjacent to $t$.  Kummer's theorem therefore gives
$v_p\binom Nk=2$.  The lower bound and this witness prove
$v_p(G(N;m))=2$ in the exceptional mixed case.

Combining these cases gives exactly the four branches stated in the theorem.
\end{proof}

\begin{remark}
The last paragraph is the part of the argument most sensitive to the corrected
McTague boundary.  The single example $m=3$, $p=2$, $N=6$ is already present in
McTague's remark; the argument above is the general $p=m-1$ mixed-case proof.
\end{remark}

\section{The plus-one theorem}\label{sec:plusone}

For comparison and for the small-modulus applications, we record the plus-one
branch in the same notation.

\begin{theorem}[Plus-one valuation]\label{thm:plus-one}
Let $m,N,p\in\N$ satisfy $0<m$, $m\mid N$, $m<N$, and let $p$ be prime with
$p\equiv 1\pmod m$.  Let $s_p(N)$ be the ordinary base-$p$ digit sum of $N$.
Then
\[
  v_p(G(N;m))=
  \begin{cases}
  1,& s_p(N)=m,\\
  0,& s_p(N)\ne m.
  \end{cases}
\]
\end{theorem}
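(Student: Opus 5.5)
We follow the same template as the minus-one proof, with the signed zero-sum replaced by an ordinary one. Since $p\equiv 1\pmod m$, we have $p^i\equiv 1\pmod m$ for every $i$. So if $k$ has base-$p$ digits $e_i$, then $k\equiv \sum_i e_i = s_p(k)\pmod m$. This is the plus-one analogue of Lemma~\ref{lem:signed-div}: $m\mid k$ if and only if $m\mid s_p(k)$. Applied to $N$, it gives $m\mid s_p(N)$. Because $N>0$, the digit sum $s_p(N)$ is therefore a positive multiple of $m$. There are two cases: $s_p(N)=m$, or $s_p(N)\geq 2m$.

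\emph{Zero branch.} Suppose $s_p(N)\geq 2m$. We choose subdigits $e_i\leq d_i$ greedily with $\sum e_i=m$. Then $k=\sum e_ip^i$ satisfies $0<k<N$, since the selection is nonzero and omits at least $m$ units of digit sum. The congruence above gives $m\mid k$. The digits of $k$ are dominated by those of $N$, so by Kummer's theorem (Theorem~\ref{thm:kummer}) the subtraction $N-k$ has no borrows. Hence $v_p\binom Nk=0$ and $v_p(G(N;m))=0$.

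\emph{Lower bound when $s_p(N)=m$.} Conversely, a valuation-zero admissible coefficient would, by Kummer, come from a proper nonzero subdigit selection $k$. Such a $k$ has $0<s_p(k)<m$, which contradicts $m\mid s_p(k)$. So every admissible coefficient has valuation at least $1$.

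\emph{One-borrow witness.} This is the main step. Let $j$ be the lowest position with $d_j>0$. Since $N>m$ and $s_p(N)=m$, not all the mass sits at $p^0$, so such a structure is usable. First suppose there is a nonzero digit at some position $t>j$.

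\begin{itemize}
\item[] \emph{Case $d_j\geq 2$ or such a higher position exists.} Take $t$ to be the lowest occupied position above the bottom, and borrow once at $t$. Set
\[
k=p^{t-1}\cdot c+(\text{subdigits of the remaining positions}),
\]
where $c\leq p-1$. The value of $c$ is chosen so that the total digit sum of $k$ is $\equiv 0\pmod m$. Removing one unit at $t$ and replacing it by up to $p-1$ units at $t-1$ produces exactly one borrow. The residue freedom is available because $p-1\geq m$.
\end{itemize}

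Concretely, one can take $k=N-p^{t}+(p-1-r)p^{t-1}$-type corrections. It is simpler to argue via $N-k$: choose $N-k=p^{t-1}\cdot c'$ plus lower subdigits. It is cleanest to apply Kummer in the additive form and pick $k=(m-1)\cdot p^{t-1}+\dots$, where the top digits of $N$ are redistributed so that exactly one carry occurs when adding $k$ and $N-k$. The inequality $m\leq p-1$, which holds because $p\equiv 1\pmod m$ and $p>1$, ensures that digits stay below $p$.

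The remaining configuration is the one where all digit mass sits at a single position $j$, so $N=mp^j$ with $j\geq 1$. In that case take $k=p^j$-shifted, namely $k=(m-1)p^{j}+(p-1)p^{j-1}+\cdots$. Alternatively, use the McTague-style witness $k=p^{j}-\,$… tuned so that a single carry occurs from position $j-1$ to $j$.

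The main obstacle is exhibiting this one-borrow admissible $k$ uniformly and checking its digit-sum residue. My first attempt will be $k=p^{t}-p^{t-1}\cdot(p-m)$-style choices, verified by computing $s_p(k)\equiv 0\pmod m$ and counting borrows directly. If the construction gets tangled, I will instead cite McTague's Theorem Q, since Section~\ref{sec:literature} identifies this statement with it exactly, via $m\mid N$ forcing ``$s_p(N)\leq m$'' to mean ``$s_p(N)=m$''.
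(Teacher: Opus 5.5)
Your congruence $k\equiv s_p(k)\pmod m$, the zero branch for $s_p(N)\ge 2m$, and the lower bound $v_p\binom Nk\ge 1$ when $s_p(N)=m$ are all correct. The gap is the step you yourself call the main one: you never pin down an admissible $k$ with exactly one borrow.

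In the case with an occupied position above the bottom, $c$ and the ``remaining subdigits'' are left unspecified. The reason you give, $p-1\ge m$, does not by itself produce a $c>d_{t-1}$ in the required residue class, and you need $c>d_{t-1}$ to force the borrow. Only $p-1-d_{t-1}$ such values exist, possibly fewer than $m$. What actually makes it work is $s_p(N)=m$. It bounds the other selected digits (at most $d_t-1$ at position $t$) by $r\le m-d_{t-1}-1$, so $c=m-r$ lies in $(d_{t-1},p-1]$.

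In the single-position case $N=mp^j$, the shape you propose is wrong for $m\ge 2$. Take $k=(m-1)p^j+(p-1)p^{j-1}+x$ with $0\le x<p^{j-1}$. Divisibility forces $s_p(x)\equiv 1\pmod m$, hence $x\ne 0$, which is impossible if $j=1$. The identity $s_p(k)+s_p(N-k)-s_p(N)=(p-1)\cdot\#\{\text{carries}\}$ then gives exactly $j-v_p(x)\ge 2$ carries. For example, $m=2$, $p=3$, $N=18$ forces $k=16$, and $v_3\binom{18}{16}=v_3(153)=2$.

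The missing witness is uniform and needs no case split. Let $t\ge 1$ be any position with $d_t\ge 1$; one exists, because $m<p$ means $N=d_0$ would force $N=m$. Take $k=mp^{t-1}$, which is exactly your ``first attempt'' $p^t-(p-m)p^{t-1}$. Since $d_t\ge 1$ and $s_p(N)=m$, one has $d_{t-1}\le m-1<m\le p-1$. So subtracting $k$ from $N$ borrows exactly once, at position $t-1$, and the digit $d_t-1\ge 0$ absorbs it. Moreover $0<k<p^t\le N$ and $m\mid k$. With this, your direct argument is a complete self-contained proof.

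The paper gives no independent argument. It identifies the statement with McTague's Theorem Q, using precisely your observation that $m\mid N$ forces $m\mid s_p(N)$, so ``$s_p(N)\le m$'' reduces to ``$s_p(N)=m$''. Your fallback is therefore the paper's route. The repaired direct proof buys independence from the citation at the cost of one short Kummer computation.
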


This is McTague's Theorem Q in the present divisibility regime.  The Lean
formalization includes it because the reduced moduli $m=1$ and $m=2$ arise after
scaling the formulas for $m=3,4,6$.

\section{Scaling}\label{sec:scaling}

The following scaling identity removes common trailing base-$p$ zeros from the
row and the modulus.

\begin{theorem}[Scaling]\label{thm:scaling}
For prime $p$ and natural numbers $c,q,N'$, one has
\[
  v_p\left(G(p^cN';p^cq)\right)=v_p\left(G(N';q)\right).
\]
\end{theorem}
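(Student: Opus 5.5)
The plan is to show that the two gcds are taken over sets of binomial coefficients in a valuation-preserving bijection, and then to use that the $p$-adic valuation of a gcd of positive integers is the minimum of their valuations.

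\textbf{Bijection of index sets.} The admissible lower indices for $G(p^cN';p^cq)$ are the $k$ with $0<k<p^cN'$ and $p^cq\mid k$. Every such $k$ can be written uniquely as $k=p^cj$ with $q\mid j$. The condition $0<k<p^cN'$ is equivalent to $0<j<N'$. Hence $j\mapsto p^cj$ is a bijection from the admissible index set for $G(N';q)$ onto the one for $G(p^cN';p^cq)$. The divisibility hypothesis also transfers, since $p^cq\mid p^cN'$ if and only if $q\mid N'$. So both gcds are defined under the same conditions.

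\textbf{Valuations agree termwise.} Next I would show
\[
  v_p\binom{p^cN'}{p^cj}=v_p\binom{N'}{j}\qquad(0\le j\le N')
\]
using Kummer's theorem (Theorem~\ref{thm:kummer}). In base $p$, the digits of $p^cN'$ and $p^cj$ are the digits of $N'$ and $j$ shifted up by $c$ places, with $c$ trailing zeros. When subtracting $p^cj$ from $p^cN'$, the lowest $c$ positions compute $0-0$ with no incoming borrow. They therefore produce no borrow, and the remaining positions carry out exactly the subtraction of $j$ from $N'$. The two borrow counts are therefore equal.

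\textbf{Conclusion.} All the binomial coefficients involved are positive integers. The valuation of their gcd is therefore the minimum of their valuations, so the termwise equality combined with the bijection gives the claimed identity.

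\textbf{Main obstacle.} I expect the only real difficulty to be degenerate conventions rather than mathematics: the cases $N'=0$, or $q=N'$ so that the admissible set is empty, and whether the definition of $G$ requires $m\ge2$. Under the bijection, either both index sets are empty or neither is. I would therefore state that with the same convention for an empty gcd (for example, $\gcd\emptyset=0$, whose valuation is handled by the same convention on each side) the identity still holds. If necessary, I would add the explicit hypotheses $q\mid N'$ and $q<N'$ to match the regime in which the theorem is applied.
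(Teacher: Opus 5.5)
Your proposal is correct and follows the paper's own argument. The paper likewise notes that the admissible indices for the scaled problem are exactly $p^c$ times those of the unscaled one, and that multiplying both indices by $p^c$ appends $c$ zero base-$p$ digits without changing the Kummer borrow count; your min-of-valuations step and your empty-gcd bookkeeping only make explicit details the paper leaves implicit.
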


The proof is a direct Kummer carry-shift argument.  The admissible indices in the
scaled problem are precisely $p^c$ times the admissible indices in the unscaled
problem, and multiplying both lower and upper indices by $p^c$ appends $c$ zero
digits in base $p$, leaving the number of borrows unchanged.

\section{Applications: the moduli \texorpdfstring{$3,4,6$}{3,4,6}}\label{sec:small}

For the small moduli $3$, $4$, and $6$, every prime not dividing the modulus is
congruent to either $1$ or $-1$ modulo the modulus.  Primes dividing the modulus
are reduced by Theorem~\ref{thm:scaling}.  This yields complete prime-by-prime
formulas.

Let $M_-(m,p,N)$ denote the right-hand side of
Theorem~\ref{thm:minus-one} and $M_+(m,p,N)$ denote the right-hand side of
Theorem~\ref{thm:plus-one}.

\begin{corollary}[Modulus $3$]\label{cor:m3}
If $p$ is prime, $3\mid N$, and $3<N$, then
\[
  v_p(G(N;3))=
  \begin{cases}
  M_+(1,3,N/3),& p=3,\\
  M_+(3,p,N),& p\equiv 1\pmod 3,\\
  M_-(3,p,N),& p\equiv -1\pmod 3.
  \end{cases}
\]
\end{corollary}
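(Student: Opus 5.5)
The plan is to split on the residue of the prime $p$ modulo $3$. Every prime is either $3$, or congruent to $1$ modulo $3$, or congruent to $-1$ modulo $3$, and each case is handled by exactly one of the earlier theorems. First we check that the three displayed branches are exhaustive and mutually exclusive. This is clear, since a prime other than $3$ is coprime to $3$, so its residue is $\pm1$.

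\emph{Case $p\equiv 1\pmod 3$.} Apply Theorem~\ref{thm:plus-one} with $m=3$. Its hypotheses $0<3$, $3\mid N$, $3<N$ and $p\equiv 1\pmod 3$ are exactly the standing assumptions, so $v_p(G(N;3))=M_+(3,p,N)$.

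\emph{Case $p\equiv -1\pmod 3$.} Apply Theorem~\ref{thm:minus-one} with $m=3$. The hypothesis $m\geq 3$ holds with equality, and the other hypotheses are again the standing ones, so $v_p(G(N;3))=M_-(3,p,N)$. The exceptional case $p=m-1$ is $p=2$, and it is already built into the formula for $M_-$.

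\emph{Case $p=3$.} This is the step that needs care. Write $N=3N'$ with $N'=N/3$, and apply Theorem~\ref{thm:scaling} with $c=1$, $q=1$. This gives
\[
  v_3(G(N;3))=v_3(G(3N';3\cdot 1))=v_3(G(N';1)).
\]
Next we invoke Theorem~\ref{thm:plus-one} with $m=1$ and prime $3$. Its hypotheses are:
\begin{itemize}
  \item $0<1$, which is trivial;
  \item $1\mid N'$, which is trivial;
  \item $3\equiv 1\pmod 1$, which is trivial since every integer is $0$ modulo $1$;
  \item $1<N'$, which follows from $3<N$ and $3\mid N$: these force $N\geq 6$, so $N'\geq 2$.
\end{itemize}
The theorem then gives $v_3(G(N';1))=M_+(1,3,N/3)$.

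The only point I expect to need real attention is in this last case: confirming that Theorem~\ref{thm:scaling} applies with the definition of $G$ at the reduced modulus $q=1$, and that the strict inequality $1<N'$ holds. The inequality is exactly where the hypothesis $3<N$, rather than merely $3\le N$, is used. Everything else is direct substitution into the earlier theorems.
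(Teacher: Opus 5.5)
Your proposal is correct and follows the paper's own route. Primes $p\neq 3$ go to Theorem~\ref{thm:plus-one} or Theorem~\ref{thm:minus-one} according to $p\equiv\pm1\pmod 3$, and $p=3$ is reduced by Theorem~\ref{thm:scaling} with $c=q=1$ to the modulus-$1$ case of Theorem~\ref{thm:plus-one}, with your check that $N/3\geq 2$ supplying the hypothesis $1<N/3$ that the paper leaves implicit.
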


\begin{corollary}[Modulus $4$]\label{cor:m4}
If $p$ is prime, $4\mid N$, and $4<N$, then
\[
  v_p(G(N;4))=
  \begin{cases}
  M_+(1,2,N/4),& p=2,\\
  M_+(4,p,N),& p\equiv 1\pmod 4,\\
  M_-(4,p,N),& p\equiv -1\pmod 4.
  \end{cases}
\]
\end{corollary}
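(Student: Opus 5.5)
We prove Corollary~\ref{cor:m4} by splitting on the residue of the prime $p$ modulo $4$, in the same way as Corollary~\ref{cor:m3}. Every prime is either $p=2$, $p\equiv 1\pmod 4$, or $p\equiv 3\equiv -1\pmod 4$, so the three branches cover everything.

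\textbf{Odd primes.} Take first $p\equiv 1\pmod 4$. The hypotheses $4\mid N$ and $4<N$ are exactly those of Theorem~\ref{thm:plus-one} with $m=4$, where $0<m$ holds trivially. That theorem then gives $v_p(G(N;4))=M_+(4,p,N)$. Now take $p\equiv -1\pmod 4$. Theorem~\ref{thm:minus-one} requires $m\geq 3$, $m\mid N$, $m<N$ and $p\equiv -1\pmod m$, all of which hold here. It gives $v_p(G(N;4))=M_-(4,p,N)$. The dichotomy $p=m-1=3$ or $p>4$ is already absorbed into the definition of $M_-$.

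\textbf{The prime $p=2$.} Since $4\mid N$, write $N=4N'=2^2N'$ and $4=2^2\cdot 1$. Theorem~\ref{thm:scaling} with $c=2$ and $q=1$ gives
\[
v_2(G(N;4))=v_2(G(N/4;1)).
\]
We then apply Theorem~\ref{thm:plus-one} with $m=1$ and $p=2$. Its hypotheses are checked as follows:
\begin{itemize}
\item $p\equiv 1\pmod 1$ holds trivially;
\item $1\mid N/4$ holds trivially;
\item $1<N/4$ follows from $4<N$ together with $4\mid N$, which force $N\geq 8$ and hence $N/4\geq 2$.
\end{itemize}
This yields $v_2(G(N/4;1))=M_+(1,2,N/4)$, which is the stated first branch.

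\textbf{Main obstacle.} The only delicate point is the definitional edge of the scaling step. The gcd $G(N';q)$ is taken over the range $0<k<N'$, and for $q=1$ this range is nonempty exactly when $N'\geq 2$. This is why the strict inequality $4<N$, rather than merely $4\mid N$, is needed. We must also confirm that Theorem~\ref{thm:scaling} is stated with no side conditions beyond $p$ prime, as written, so that it applies directly with $N'=N/4$. The remaining branches are direct citations.
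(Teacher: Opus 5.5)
Your proposal is correct and takes the same route the paper indicates in Section~\ref{sec:small}: split on $p=2$, $p\equiv 1\pmod 4$, $p\equiv -1\pmod 4$; cite Theorem~\ref{thm:plus-one} and Theorem~\ref{thm:minus-one} directly with $m=4$ for odd primes; and at $p=2$ apply Theorem~\ref{thm:scaling} with $c=2$, $q=1$, then Theorem~\ref{thm:plus-one} with $m=1$. Your check that $4\mid N$ and $4<N$ force $N/4\geq 2$, so that the hypothesis $1<N/4$ holds, supplies the one detail the paper leaves implicit.
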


\begin{corollary}[Modulus $6$]\label{cor:m6}
If $p$ is prime, $6\mid N$, and $6<N$, then
\[
  v_p(G(N;6))=
  \begin{cases}
  M_-(3,2,N/2),& p=2,\\
  M_+(2,3,N/3),& p=3,\\
  M_+(6,p,N),& p\equiv 1\pmod 6,\\
  M_-(6,p,N),& p\equiv -1\pmod 6.
  \end{cases}
\]
\end{corollary}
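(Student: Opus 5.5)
The plan is to reduce each prime to one of the three earlier theorems: Theorem~\ref{thm:scaling} handles the primes dividing $6$, and Theorems~\ref{thm:minus-one} and~\ref{thm:plus-one} handle the remaining primes directly. Throughout, fix $N$ with $6\mid N$ and $6<N$. The case split is exhaustive. A prime $p$ either divides $6$, so $p\in\{2,3\}$, or satisfies $p\geq 5$; in the latter case $p$ is coprime to $6$, so $p\bmod 6\in\{1,5\}$, that is, $p\equiv 1$ or $p\equiv -1\pmod 6$.

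For $p=2$, write $N=2N'$ and $6=2\cdot 3$, and apply Theorem~\ref{thm:scaling} with $c=1$, $q=3$. This gives $v_2(G(N;6))=v_2(G(N';3))$. Next I check the hypotheses of Theorem~\ref{thm:minus-one} for the pair $(N',3)$:
\begin{itemize}
\item $m=3\geq 3$;
\item $3\mid N'$, since $6\mid N$;
\item $3<N'$, since $N>6$ gives $N'>3$;
\item $2\equiv -1\pmod 3$.
\end{itemize}
So the theorem applies and yields $M_-(3,2,N/2)$. This lands in the exceptional regime $p=m-1$, which Theorem~\ref{thm:minus-one} explicitly covers, so no separate argument is needed.

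For $p=3$, write $N=3N'$ and apply Theorem~\ref{thm:scaling} with $c=1$, $q=2$. This gives $v_3(G(N;6))=v_3(G(N';2))$. Then I check the hypotheses of Theorem~\ref{thm:plus-one}:
\begin{itemize}
\item $0<2$;
\item $2\mid N'$, since $6\mid N$;
\item $2<N'$, since $N>6$ gives $N'>2$;
\item $3\equiv 1\pmod 2$.
\end{itemize}
This yields $M_+(2,3,N/3)$.

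For $p\equiv 1\pmod 6$, Theorem~\ref{thm:plus-one} applies directly with $m=6$, since $0<6$, $6\mid N$ and $6<N$. For $p\equiv -1\pmod 6$, Theorem~\ref{thm:minus-one} applies directly with $m=6\geq 3$, giving $M_-(6,p,N)$.

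The only step needing care is transferring the side conditions through the scaling reduction. Theorem~\ref{thm:scaling} is stated for arbitrary $c,q,N'$, so its equality holds unconditionally. The strict inequality $6<N$ is exactly what gives $q<N'$ in both reduced problems, and that inequality is a hypothesis the downstream valuation formulas require.
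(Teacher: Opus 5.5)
Your proof is correct and follows the paper's route exactly. Theorem~\ref{thm:scaling} reduces $p=2$ and $p=3$ to $G(N/2;3)$ and $G(N/3;2)$, every other prime lies in the class $\pm1 \pmod 6$ handled directly by Theorems~\ref{thm:plus-one} and~\ref{thm:minus-one}, and your check that $6<N$ gives $3<N/2$ and $2<N/3$ (together with $3\mid N/2$ and $2\mid N/3$) is the hypothesis bookkeeping the paper leaves implicit.
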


These corollaries are applications of the theorem layer rather than separate
headline novelty claims.

\section{Formal verification and provenance}\label{sec:formal}

The theorem layer described above has been formalized in Lean 4 / Mathlib.  The
principal theorem is
\[
  \texttt{PascalMinusOne.minus\_one\_valuation}.
\]
The public formal theorem layer also includes
\begin{center}
\begin{minipage}{0.92\linewidth}
\ttfamily\small
PascalMinusOne.plus\_one\_valuation,\quad
PascalMinusOne.scaling\_valuation,\quad
PascalMinusOne.modulus\_three\_valuation,\quad
PascalMinusOne.modulus\_four\_valuation,\quad
PascalMinusOne.modulus\_six\_valuation.
\end{minipage}
\end{center}
The formalization uses Mathlib infrastructure for binomial coefficients, finite
gcds, base-$p$ digits, $p$-adic valuations, and Kummer's theorem for binomial
coefficients.  The project proof development contains zero explicit
\texttt{sorry}, \texttt{admit}, or replacement-axiom placeholders in the Lean
sources.

A Palomar provenance record exists for the principal minus-one theorem:
\[
  \texttt{PALOMAR-2026-09-25-000017},\quad \text{version }1.
\]
The record is a verification/provenance artifact for the stated formal theorem;
it is not peer review, publication, or evidence of historical priority.

The source repository is
\begin{center}
\url{https://github.com/jfairfaxball-348/pascal-minus-one}
\end{center}
The Palomar record is
\begin{center}
\url{https://palomar-registry.org/entry.html?id=PALOMAR-2026-09-25-000017\&version=1}
\end{center}

\section*{Acknowledgements}

I thank the Mathlib community for the formal infrastructure used here, including
Kummer's theorem for binomial coefficients, digit lemmas, gcd machinery, and
$p$-adic valuation results.  I also acknowledge McTague's and Wu's closely
related work on the same restricted binomial-gcd family, and the Lean
formalization by Guo, Qiu, Cao, Feng and Gao of a different binomial-gcd problem.

\end{document}